\begin{document}

\newtheorem{theorem}{Theorem}[section]
\newtheorem{lemma}[theorem]{Lemma}
\newtheorem{proposition}[theorem]{Proposition}
\newtheorem{corollary}[theorem]{Corollary}

\theoremstyle{definition}
\newtheorem{definition}[theorem]{Definition}
\newtheorem{example}[theorem]{Example}
\newtheorem{xca}[theorem]{Exercise}
\newtheorem{notation}[theorem]{Notation}

\theoremstyle{remark}
\newtheorem{remark}[theorem]{Remark}


\newcommand{\NN}{ {\mathbb N} }
\newcommand{\ZZ}{ {\mathbb Z} }
\newcommand{\RR}{ {\mathbb R} }
\newcommand{\CC}{{\mathbb C}}
\newcommand{\cD}{{\mathcal D}}
\newcommand{\cP}{ {\mathcal P} }
\newcommand{\cPP}{{\mathcal{PS}}}
\newcommand{\cF}{{\mathcal F}}
\newcommand{\HH}{{\mathcal H}}
\newcommand{\GG}{{\mathcal G}}
\newcommand{\cV}{{\mathcal V}}
\newcommand{\cW}{{\mathcal W}}
\newcommand{\cM}{{\mathcal M}}
\newcommand{\KK}{{\kappa}}
\newcommand{\cU}{{\mathcal U}}
\newcommand{\UU}{\mathcal{U}}

\newcommand{\cX}{{\mathcal X}}
\newcommand{\dd}{\mathbf{d}}
\newcommand{\cI}{{T}}
\newcommand{\ff}{\varphi}
\newcommand{\la}{\langle}
\newcommand{\ra}{\rangle}
\newcommand{\tr}{\mathrm{tr}}
\newcommand{\Tr}{\mathrm{Tr}}
\newcommand{\EE}{\mathrm{E}}
\newcommand{\ct}{{\bf t}}
\newcommand{\cB}{\mathcal{B}}
\newcommand{\cA}{\mathcal{A}}
\newcommand{\kt}{\kk^\ct}
\newcommand{\odo}{\otimes\cdots\otimes}
\newcommand{\cyc}{\mathbf{c}\,}
\newcommand{\SNC}{S_{NC}}
\newcommand{\SNCc}{\overline{\SNC}}
\newcommand{\SNCe}{S^{(\epsilon)}_{NC}}
\newcommand{\NC}{NC}
\newcommand{\cyclic}{{\text{\rm cyc}}}
\newcommand{\lin}{{\text{\rm lin}}}
\newcommand{\Wg}{\mathrm{Wg}}
\newcommand{\fluct}{\zeta}
\newcommand{\Nb}{{(N)}}
\newcommand{\id}{\text{id}}
\newcommand{\cS}{\mathcal{S}}
\newcommand{\ii}{\text{\bf i}}
\newcommand{\jj}{\text{\bf j}}

\hyphenation{Voi-cu-les-cu}


\newcommand{\cc}{\kappa}
\newcommand{\cG}{\mathcal{G}}
\newcommand{\cH}{\mathcal{H}}
\newcommand{\cR}{\mathcal{R}}
\newcommand{\cC}{\mathcal{C}}
\newcommand{\cov}{\mathrm{cov}}
\newcommand{\kk}{\kappa}
\newcommand{\moeb}{\mathrm{\text{M\"ob}}}
\newcommand{\cWg}{C}
\newcommand{\tovert}{\to}
\newcommand{\IZ}{\mathrm{IZ}}
\newcommand{\bF}{\text{\bf F}}
\newcommand{\Distrhigher}{D_{higher}}
\newcommand{\ab}{\allowbreak}
\newcommand{\ol}{\overline}
\newcommand{\ie}{\textit{i.e.}\,}
\newcommand{\ee}{\varepsilon}
\newcommand{\HHc}{\HH^{\circ}}

\newcommand{\knk}{{(n)}}

\newcommand{\ky}{\kappa}

\newcommand{\GUE}{\text{GUE}}

\title{Mixtures of classical and free independence}
\author{Roland Speicher}
\author{Janusz Wysoczanski}

\thanks{R.S. is supported by the ERC-Advanced Grant ``Non-commutative Distributions in Free Probability''.}

\begin{abstract}
We revive the concept of $\Lambda$-freeness of M\l otkowski
\cite{Mlo}, which describes a mixture of classical and free independence between algebras of random variables. In particular, we give a
description of this in terms of cumulants; this will be instrumental in 
the subsequent paper \cite{SW} where the quantum symmetries underlying these mixtures of classical and free independences will be considered.

\end{abstract}

\maketitle

\section{Introduction}
In the context of non-commutative probability spaces
there are only very few possibilities for universal notions of independence. If we require that this notion is commutative (i.e., $x$ independent from $y$ is the same as $y$ independent from $x$) and that
constants are independent from everything then there are only two such concepts, namely the classical independence and the free independence. On
the level of algebras, equipped with a state, this means that there are only two universal kind of product constructions, namely the tensor product and the
reduced free product. We refer the reader to \cite{Sp-universal,Mur,BGSch} for more details on this.

So if we have a collection of variables which are independent (in this univeral sense) then there are only two possibilities; they are either all classically independent or they are all freely 
independent. On the other hand, we can gain some more flexibility if we
do not ask for the same kind of independence between all of them. This 
raises the question about mixtures of the two forms of independences.
Of course, one can create quite easily such situations by starting with two sets
of variables $X$ and $Y$ which are free; then split each of them into 
two subsets $X=X_1\cup X_2$ and $Y=Y_1\cup Y_2$, such that $X_1$ and
$X_2$ are classically independent and $Y_1$ and $Y_2$ are freely independent. One can continue in this fashion and get so a collection of variables where some pairs of them are free and other pairs are classically independent. However, this is restricted to situations where we can group our
variables in sets with specific kind of independence among them. We are interested in a generalization of this, by trying to prescribe arbitrarily free or classical independence for any pair. An example for this would be to ask for five
variables $x_1,x_2,x_3,x_4,x_5$ such that
\begin{itemize}
\item
$x_1$ and $x_2$ are free, 
\item
$x_2$ and $x_3$ are free, 
\item
$x_3$ and $x_4$ are
free, 
\item
$x_4$ and $x_5$ are free, 
\item
$x_5$ and $x_1$ are free,
\item
but all other pairs are independent. 
\end{itemize}
(Here and in the following we will always mean ``classically independent'' when we say ``independent''.)

Such a situation cannot be generated
by the above dividing into groups, and it is not clear apriori whether such
a requirement can be satisfied in any meanigful way. In \cite{Mlo}
M\l otkowski showed that this can, indeed, be achieved for any prescription of the mixture of free and classical independence. For this he introduced the general notion of $\Lambda$-freeness. It seems that his work did not get the attention it deserves and we hope that our work will stimulate new interest in this concept. To his original results we will add here a description of the combinatorial structure of $\Lambda$-freeness, featuring in particular a formula for mixed moments in terms of free cumulants. This will be taken up
in the subsequent paper \cite{SW} and will lead to new forms of quantum groups, with partial commutation relations.

On the level of groups or semi-groups the prescription of commutation relations for some fixed pairs of generators is of course not new;
in the group case this goes, among others, under the names of ``right angled Artin groups'' (see \cite{Cha}), ``free partially commutative groups'' or ``trace groups'', in the case of semi-groups one talks about ``Cartier-Foata monoids'' (see \cite{foata1969problemes}) or ``trace monoids''.
Actually, there is also the notion of a corresponding mixed product of groups, which is usually called the ``graph product of groups'' and was introduced by Green in \cite{thesis}.
In a sense $\Lambda$-freeness reveals the notion of ``independence'' for the group algebras of such graph products of groups with respect to their canonical trace. We will make this connection precise in Proposition
\ref{prop:on-groups}.

Our interest in $\Lambda$-freeness
arouse out of discussions on similar constructions of the second author, on mixtures between monotone and boolean \cite{J-bm} and boolean and free independences \cite{J-bf}. 
Much motivation is also taken from recent work on bi-freeness \cite{Voi-bifree,MaNi-bifree,CNS-bifree}. Bifreeness does not fit in the frame presented here, but there are some similarities, in particular, concerning the underlying combinatorics.

\section{The setting}
The notion of $\Lambda$-freeness is defined in terms of a matrix
which specifies the choice which pairs should be free and which should be independent. 
M\l otkowski denoted this matrix by $\Lambda$; we prefer here to call it $\ee$, and hence we will also speak of $\ee$-freeness or, alternatingly, $\ee$-independence.

So let $I$ be an index set (finite or infinite). 
For any given collection of algebras $\cA_i$,
for all $i\in I$, we want to embed the $\cA_i$ in a bigger algebra $\cA$,
such that for each pair of algebras we have that they are either free or independent. In order to specify this choice we will use a symmetric matrix
$\ee=(\ee_{ij})_{i,j\in I}$ with non-diagonal entries either 0 or 1. This $\ee$ should specify our 
mixture according to: 
\begin{itemize}
\item
$\cA_i$ and $\cA_j$ are free if $\ee_{ij}=0$, and
\item
$\cA_i$ and $\cA_j$ are independent if $\ee_{ij}=1$ (which includes in
particular, that $\cA_i$ and $\cA_j$ commute
\end{itemize}
It will be convenient to set $\ee_{ii}=0$ for all $i\in I$.

In the following such a matrix $\ee$ will be fixed.
Of course, we can identify such a matrix with the adjacency matrix of a simple (i.e., no loops, no multiple edges) graph; then the edges of the graph give us the independence relations between the involved algebras, which correspond to the vertices of the graph.

For the basic notions and facts about non-commutative probability spaces, non-crossing partitions or free cumulants we refer to \cite{NS}.

\section{The definition of $\ee$-independence}

\begin{notation}
Let us use the following notation. Given some subalgebras 
$\cA_i$ ($i\in I$) and an index-tuple $\ii=(i(1),\dots,i(n))\in I^n$ we write
$(a_1,\dots,a_n)\in \cA_\ii$ for: $a_k\in \cA_{i(k)}$ for $k=1,\dots,n$.
\end{notation}

\begin{definition}
1) By $I_n^\ee$ we denote those $n$-tuples of indices from $I$ for which neigbours are different modulo our $\ee$-relations; more precisely, $\ii=(i(1),\dots,i(n))\in I_n^\ee$ if and only if: if we have $i(k)=i(l)$ for $1\leq k<l\leq n$ then there is a $p$ with
$k<p<l$ such that $i(p)\not=i(k)$ and $\ee_{i(k)i(p)}=0$.

2) Let $(\cA,\ff)$ be a non-commutative probability space. We say that unital subalgebras $\cA_i$ ($i\in I$) are \emph{$\ee$-independent}, if we have the following.
\begin{itemize}
\item
$\cA_i$ and $\cA_j$ commute for all $(i,j)$ for which $\ee_{ij}=1$ and
\item
whenever $n\in\NN$ and $(a_1,\dots,a_n)\in \cA_{\ii}$ such that
$\ff(a_k)=0$ for all $k=1,\dots,n$ and such that $\ii\in I_n^\ee$,
then we have $\ff(a_1\cdots a_n)=0$. 
\end{itemize}
\end{definition}

Note that we can use the usual centering trick to reduce any mixed moment to mixed moments of the above form; hence if we know $\ff$ restricted to each of the $\cA_i$ and we know that the $\cA_i$ are $\ee$-independent, then $\ff$ is uniquely determined on the algebra generated by all the $\cA_i$. Namely, consider an arbitrary mixed moment of the form
$\ff(a_1\cdots a_n)$ with $(a_1,\dots,a_n)\in\cA_\ii$. We can also assume that
$\ii\in I^n_\ee$ (otherwise, by using the commutation relations among the algebras, we bring elements from the same algebra together and replace them by their product). Then we write each $a_k$ as
$a_k=\ff(a_k)1+ a_k^\circ$. We plug this in for $a_1\cdots a_n$ and multiply out. We get one term of length $n$, namely $a_1^\circ a_2^\circ\cdots
a_n^\circ$ plus many other terms with fewer factors. By induction we can assume that we already know how to calculate $\ff$ applied to those smaller terms, and for the longest term we have $\ff(a_1^\circ a_2^\circ\cdots
a_n^\circ)=0$, by our definition of $\ee$-independence.

It is also clear that
if $\ee_{ij}=1$ for all $i\not=j$, then $\ee$-independence is the same as classical independence; and if $\ee_{ij}=0$ for all $i,j$, then $\ee$-independence
is the same as free independence.

What might be not so clear from this definition is whether, given non-commutative probability spaces $(\cA_i,\ff_i)$ for all $i\in I$, one can embed them in a bigger non-commutative probability space $(\cA,\ff)$ such that $\ff$ restricted to $\cA_i$ yields $\ff_i$ and such that the $(\cA_i)_{i\in I}$ are
$\ee$-independent in $(\cA,\ff)$. That this is indeed the case, for any choice of $\ee$, as well as the fact that positivity and traciality of the the involved linear functionals is preserved under such a construction,
was one of the main results of \cite{Mlo}.

\section{$\ee$-independence and the $\ee$-products of groups}

As we already mentioned in the Introduction, on the level of groups, the notion of groups with partial commutation relations is a well-known one. Actually, there is also the notion of an $\ee$-product of groups, which is usually called the graph product of groups (corresponding to the graph with adjacency matrix $\ee$) and was introduced by Green in \cite{thesis}, see also
\cite{HM}.

\begin{definition}
Let $G_i$ ($i\in I$) be groups. Then the $\ee$-product (or the \emph{graph product}) $\star_\ee G_i$ is
the quotient of the free product group $\star_{i\in I}G_i$ by the relations that $G_i$ and $G_j$ commute whenever $\ee_{ij}=1$.
\end{definition}

As expected, the notion of $\ee$-independence is adapted to this setting
of an $\ee$-product of groups.

\begin{proposition}\label{prop:on-groups}
1) Let $G=\star_\ee G_i$ be the $\ee$-product of subgroups $G_i$. Denote by
$\tau:\CC G\to\CC$ the canonical state on the group algebra $\CC G$, which 
gives the coefficient of the neutral element in a linear combination of group elements. Then, the group algebras of the subgroups, $\CC G_i$ ($i\in I$),
are $\ee$-independent in the non-commutative probability space $(\CC G,\tau)$.

2) In particular, in the group algebra of a right angled Artin group $G=\la s_i (i\in I) \mid s_is_j=s_js_i \text{ for all $i\not=j$ with $\ee_{ij}=1$}\ra$ the generators $s_i$ ($i\in I$) are $\ee$-independent.
\end{proposition}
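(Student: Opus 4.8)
The plan is to verify the two bullet points in the definition of $\ee$-independence directly for the subalgebras $\CC G_i \subset \CC G$ with the canonical trace $\tau$. The commutation requirement is immediate: if $\ee_{ij}=1$ then $G_i$ and $G_j$ commute in $G$ by definition of the $\ee$-product, hence so do $\CC G_i$ and $\CC G_j$. So the entire content is the centered-moment condition: given $\ii=(i(1),\dots,i(n))\in I_n^\ee$ and $a_k\in\CC G_{i(k)}$ with $\tau(a_k)=0$, one must show $\tau(a_1\cdots a_n)=0$. By linearity it suffices to take each $a_k$ a linear combination of group elements in $G_{i(k)}$ with the coefficient of the neutral element $e$ equal to zero, i.e. $a_k=\sum_{g\in G_{i(k)}\setminus\{e\}}c_k(g)\,g$; expanding the product, $\tau(a_1\cdots a_n)$ is a sum over tuples $(g_1,\dots,g_n)$ with $g_k\in G_{i(k)}\setminus\{e\}$ of $c_1(g_1)\cdots c_n(g_n)\,\tau(g_1\cdots g_n)$, and $\tau(g_1\cdots g_n)$ is $1$ if $g_1\cdots g_n=e$ in $G$ and $0$ otherwise. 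Thus the whole statement reduces to the purely group-theoretic claim: \emph{if $g_k\in G_{i(k)}\setminus\{e\}$ for $k=1,\dots,n$ and $\ii\in I_n^\ee$, then $g_1\cdots g_n\neq e$ in $\star_\ee G_i$.}

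The key tool for this reduction is the normal form theory for graph products of groups, due to Green \cite{thesis} (see also \cite{HM}). Recall that an \emph{expression} $g_1\cdots g_n$ with each $g_k$ a nontrivial element of some $G_{i(k)}$ is called \emph{reduced} if it cannot be shortened by a sequence of the two allowed moves: (a) if $i(k)=i(l)$ for some $k<l$ and all intermediate letters commute with $G_{i(k)}$ (i.e. $\ee_{i(k)i(p)}=1$ for $k<p<l$), replace $g_k$ and $g_l$ by their product $g_kg_l\in G_{i(k)}$, deleting a letter (or two, if the product is trivial); (b) swap adjacent commuting letters $g_pg_{p+1}=g_{p+1}g_p$ when $\ee_{i(p)i(p+1)}=1$. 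Green's normal form theorem states that a reduced expression of length $\geq 1$ never represents the neutral element, and any two reduced expressions for the same group element are related by a sequence of swaps of type (b). The plan is: first observe that the condition $\ii\in I_n^\ee$ from Definition is \emph{exactly} the condition that no type-(a) move applies to $g_1\cdots g_n$ (if $i(k)=i(l)$ with $k<l$ then there is some $p$ between them with $i(p)\neq i(k)$ and $\ee_{i(k)i(p)}=0$, blocking the reduction at the first such pair). Second, note this property is preserved under type-(b) swaps, so the expression is already reduced; hence by Green's theorem it cannot equal $e$, which is what we need.

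The main obstacle — really the only nontrivial point — is making precise that $\ii\in I_n^\ee$ genuinely implies the expression is reduced, since a priori the condition in Definition only rules out the ``obvious'' collapse between a pair $(k,l)$ with nothing forcing them apart in the \emph{given} ordering, whereas reducedness must hold after all possible shuffles. One must argue that if some sequence of type-(b) swaps brought two letters from $G_{i(k)}$ adjacent (so that a type-(a) move became available), then in the original word every letter strictly between positions $k$ and $l$ must have commuted past, i.e. $\ee_{i(k)i(p)}=1$ for all $k<p<l$ — contradicting the defining property of $I_n^\ee$ which supplies a $p$ with $\ee_{i(k)i(p)}=0$. This is a standard but slightly delicate lemma in the combinatorics of trace monoids / graph products (it amounts to the fact that type-(b)-equivalence preserves, for each pair of letters lying in the same $G_i$ and ``linked'' in the dependence graph, their relative order); I would either cite it from \cite{thesis,HM} or prove it by induction on the number of swaps. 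Once this is in place, part 2) is an immediate specialization: a right angled Artin group is the $\ee$-product of copies of $\ZZ$, one for each generator $s_i$, so the generators generate the subgroups $\langle s_i\rangle\cong\ZZ$ and the claim follows from part 1).
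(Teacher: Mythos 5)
Your proof is correct and follows essentially the same route as the paper: verify the commutation relations, reduce by linearity to the group-theoretic claim that a product of nontrivial elements $g_k\in G_{i(k)}$ with $\ii\in I_n^\ee$ cannot be the neutral element, and invoke Green's normal form theorem for graph products (with part 2 as the specialization to copies of $\ZZ$). The only difference is that the paper sidesteps the ``delicate lemma'' you flag: Green's Definition 3.5 of a reduced sequence is verbatim the condition defining $I_n^\ee$ (between any two letters from the same vertex group there is a letter from a different, non-commuting vertex group), so the sequence is reduced by definition and Theorem 3.9 of Green's thesis applies immediately, with no bridging argument about shuffles needed.
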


\begin{proof}
1) The $\ee$-commutation relations are clear. So it remains 
to show that a product $g_1\cdots g_n$ of group elements with $g_j\in G_{i(j)}$ and $(i(1),\dots,i(n))\in I_n^\ee$, cannot be the neutral element if none of the $g_j$ is the neutral element. But this follows from the description of graph groups in \cite{thesis}. In the notation of Definition 3.5 of \cite{thesis},  $(g_1,\dots,g_n)$ is a reduced sequence, and then the above
statement is contained in Theorem 3.9 of \cite{thesis}. See also \cite{Cha,HM}.

2) This follows from the previous part, because our right angled Artin group is the $\ee$-product of $\vert I\vert$-many copies of $\ZZ$.
\end{proof}

\section{Description of $\ee$-independence via free cumulants}

We come now to the main result of this note.
Namely, we want to see that we can also describe our notion of $\ee$-independence by some cumulant machinery. Note, however, that we do not 
introduce some kind of new cumulants, but the moment-cumulant formula will always involve the usual free cumulants. What makes the difference is the set of partitions over which we sum. 

\begin{definition}
Let us define, for each $\ii=(i(1),\dots,i(n))$,
$NC^\ee[\ii]$ as those partitions $\pi\in \cP(n)$ for which we have $\pi\leq
\ker\ii$ (i.e., $\pi$ connects only $k$ and $l$ for which we have $i(k)=i(l)$) and
which can be reduced to the empty partition by
iteration of the following two operations: 
\begin{itemize}
\item
remove ``interval''-blocks, which consist just of neighbouring elements; i.e.,
if $\pi=\tilde \pi\cup\{(r,r+1,r+2,\dots,r+p)\}$, then $\pi\in NC^\ee[\ii]$ if
and only if $\tilde \pi\in NC^\ee[i(1),\dots,i(r-1),i(r+p+1),\dots,i(n)]$
\item
exchange the points $k$ and $k+1$ if we have
$\ee_{i(k)i(k+1)}=1$; i.e., if we denote by $\pi_{l\leftrightarrow k}$ the partition which we get from $\pi$ by swaping the points $k$ and $l$, then
$$\pi\in NC^\ee[i(1),\dots,i(k),i(k+1),\dots,i(n)]$$
if and only if
$$\pi_{k\leftrightarrow k+1}\in NC^\ee[i(1),\dots,i(k+1),i(k),\dots,i(n)].$$
Recall that on the diagonal we have set $\ee$ to 0, i.e., we have $\ee_{ii}=0$ for all $i\in I$.
\end{itemize}
\end{definition}

Another way of saying this is
$$NC^\ee[\ii]=\{ \pi\in \cP(n)\mid \pi \leq \ker \ii \quad\text{and $\pi$ is
$(\ee,\ii)$-non-crossing}\},$$
where $(\ee,\ii)$-non-crossing for a $\pi$ with $\pi\leq \ker\ii$ means that if there are $1\leq p_1<q_1<p_2<q_2\leq n$ such that
$p_1\sim_\pi p_2$, $q_1\sim_\pi q_2$, $p_1\not\sim_\pi q_1$, then $\ee_{i(p_1)i(q_1)}=1$.

Note that in the case where all $i$-indices are the same, $i(1)=i(2)=\dots=
i(n)=i$, the second operation comes never into effect and hence, for any choice of $\ee$, we have
$$NC^\ee[(i,i,\dots,i)]=NC(n).$$

Let us also check the two extremes in $\ee$. First, assume that all $\ee_{ij}$ are zero. Then $(\ee,\ii)$-non-crossing is the same as non-crossing and hence we have:
\begin{equation}\label{eq:NC-zero}
NC^\ee[\ii]=\{\pi\in NC(n)\mid \pi\leq\ker \ii\}
 \qquad\text{if $\ee_{ij}=0$ for all $i,j$}.
\end{equation}

On the other hand,  when
$\ee_{ij}=1$ for all $i\not=j$, then all blocks of $\ker\ii$ can be commuted and
$NC^\ee[\ii]$ factorizes into a product of non-crossing lattices, one for each block of $\ker \ii$,
\begin{equation}\label{eq:NC-fact}
NC^\ee[\ii]=\prod_{V\in\ker \ii} NC(V) \qquad\text{if $\ee_{ij}=1$ for all $i\not= j$}.
\end{equation}

\begin{theorem}\label{thm:mom-cum}
Let $\cA_i$ ($i\in I$) be $\ee$-independent in $(\cA,\ff)$. Consider $\ii\in I^n$ and  $(a_1,\dots,a_n)\in \cA_\ii$.
Then we have
\begin{equation}\label{eq:m-c}
\ff(a_1\cdots a_n)=\sum_{\pi\in NC^\ee[\ii]}\kk_\pi(a_1,\dots,a_n),
\end{equation}
where  
$\kk_\pi(a_1,\dots,a_n)$ is the product of
the {\it free} cumulants for each block,
$$\kk_\pi(a_1,\dots,a_n)=\prod_{V\in\pi} \kk_V((a_k)\vert V),$$
where for $V=(r_1<\dots < r_p)\in\pi$ we set
$$ \kk_V((a_k)\vert V)=\kk_p(a_{r_1},\dots,a_{r_p}).$$
\end{theorem}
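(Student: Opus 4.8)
The plan is to prove \eqref{eq:m-c} by induction on $n$, comparing its two sides through the centering trick. Recall that on the algebra generated by the $\cA_i$ the functional $\ff$ is determined by its restrictions to the $\cA_i$, the $\ee$-commutation relations, and the vanishing of $\ff(a_1^\circ\cdots a_n^\circ)$ when $\ii\in I_n^\ee$ — this is exactly the remark following the definition of $\ee$-independence — so it suffices to show that the right-hand side of \eqref{eq:m-c}, abbreviated $\psi_\ii(a_1,\dots,a_n):=\sum_{\pi\in NC^\ee[\ii]}\kk_\pi(a_1,\dots,a_n)$, obeys the same recursion. Every $\pi$ in this sum has $\pi\le\ker\ii$, so each block lies inside one $\cA_i$ and the factors $\kk_V$ are honest free cumulants (cumulants depend only on moments). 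The restriction property is immediate from $NC^\ee[(i,\dots,i)]=NC(n)$, where $\psi$ becomes the ordinary free moment--cumulant formula. Invariance of $\psi_\ii$ under the commutation moves used in that remark follows from the second reduction operation: for $\ee_{i(k)i(k+1)}=1$ the map $\pi\mapsto\pi_{k\leftrightarrow k+1}$ is a bijection between the relevant $NC^\ee$'s, and, $k$ and $k+1$ then lying in distinct blocks, it preserves every cumulant factor; a merge of two adjacent elements of the same $\cA_i$ is handled by the product formula for free cumulants of \cite{NS}. Hence in the inductive step we may assume $\ii\in I_n^\ee$.

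For such $\ii$, write $a_j=\ff(a_j)\mathbf 1+a_j^\circ$. Expanding the left side gives $\ff(a_1\cdots a_n)=\sum_{U\subseteq\{1,\dots,n\}}\bigl(\prod_{k\notin U}\ff(a_k)\bigr)\ff\bigl(\prod_{k\in U}a_k^\circ\bigr)$, the term $U=\{1,\dots,n\}$ vanishing by $\ee$-independence. Expanding the right side, multilinearity of the cumulants and the vanishing of $\kk_p$ on a constant entry for $p\ge2$ give $\psi_\ii(a_1,\dots,a_n)=\sum_{U}\bigl(\prod_{k\notin U}\ff(a_k)\bigr)\sum_{\rho}\kk_\rho\bigl((a_k^\circ)_{k\in U}\bigr)$, where $\rho$ runs over the singleton-free partitions in $NC^\ee[\ii|_U]$: a singleton block can be deleted at any moment of a reduction (it is an interval block), so $\pi\in NC^\ee[\ii]$ iff $\pi$ with its singletons removed lies in $NC^\ee$ of the sub-word on the remaining positions, which makes the grouping by the support $U$ of the non-singleton blocks a bijection. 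For $|U|<n$ the induction hypothesis identifies the inner sum with $\ff(\prod_{k\in U}a_k^\circ)$ (the singleton partitions of $NC^\ee[\ii|_U]$ drop out since $\kk_1(a_k^\circ)=0$), so those $U$-terms match the left side. Everything thus reduces to the term $U=\{1,\dots,n\}$, i.e.\ to the purely combinatorial statement: \emph{if $\ii\in I_n^\ee$ then $NC^\ee[\ii]$ contains no singleton-free partition}. This forces $\psi_\ii(a_1^\circ,\dots,a_n^\circ)=0=\ff(a_1^\circ\cdots a_n^\circ)$ and closes the induction.

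To prove that statement, fix $\pi\in NC^\ee[\ii]$ and a sequence of the two reduction operations carrying $\pi$ to the empty partition. Swaps do not change the number of points, so the sequence contains an interval-block removal; consider the first one. Up to that moment only swaps have been applied, turning $\ii$ into a word $\jj$ in which the letter at position $m$ is the $\sigma(m)$-th letter of $\ii$ for a permutation $\sigma$ that is a product of adjacent transpositions never exchanging indices $a,b$ with $\ee_{ab}=0$; consequently, whenever $\ee_{i(a)i(b)}=0$ the $a$-th and $b$-th letters keep their relative order, i.e.\ $a<b\iff\sigma^{-1}(a)<\sigma^{-1}(b)$. The block $B=\{r,r+1,\dots,r+p\}$ removed at that step is an interval in $\jj$ and a block of the correspondingly relabelled partition, which still refines $\ker\jj$. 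If $p=0$ then $B$ comes from a singleton of $\pi$ and we are done. If $p\ge1$ then $r$ and $r+1$ lie in one block, so $i(\sigma(r))=i(\sigma(r+1))$; calling $a<a'$ the two values $\sigma(r),\sigma(r+1)$, the hypothesis $\ii\in I_n^\ee$ yields a position $q$ with $a<q<a'$ and $\ee_{i(a)i(q)}=0$, hence also $\ee_{i(a')i(q)}=0$. Applying the relative-order property to $\{a,q\}$ and to $\{q,a'\}$ forces $\sigma^{-1}(q)$ to lie strictly between the consecutive integers $r$ and $r+1$ — a contradiction. So $p=0$, which proves the combinatorial statement and with it the theorem.

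I expect the last paragraph to be the main obstacle: one must control how the commutation moves can relocate the ``blocking'' index whose existence $\ii\in I_n^\ee$ guarantees, and the crucial observation — that two indices which do not $\ee$-commute can never swap past one another, hence never change their left-to-right order — is what pins the removed interval block down to a singleton. The only other non-routine input is the verification that the right-hand side of \eqref{eq:m-c} is invariant under the commutation and merging moves used to reduce to the case $\ii\in I_n^\ee$, for which the product formula for free cumulants in \cite{NS} is enough.
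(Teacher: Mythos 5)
Your argument is correct, but it is organized quite differently from the paper's proof. The paper never works inside the given $(\cA,\ff)$ directly: it forms the free product $\cB$ of copies $(\cB_i,\psi_i)$ of the $(\cA_i,\ff\vert_{\cA_i})$, \emph{defines} a functional $\psi$ on $\cB$ by the right-hand side of \eqref{eq:m-c}, checks well-definedness (consistency when an argument is a multiple of $1$, and consistency under replacing two adjacent elements of the same algebra by their product, via the product formula for free cumulants, Theorem 11.12 of \cite{NS}), verifies that $\psi$ satisfies the defining vanishing property of $\ee$-independence, and then concludes $\psi=\ff$ from the uniqueness of mixed moments of $\ee$-independent algebras. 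You instead inline that uniqueness argument as an induction on $n$ carried out directly on the cumulant sum: reduce to $\ii\in I_n^\ee$ using swap-invariance (the second reduction operation) and merge-invariance (again Theorem 11.12 of \cite{NS}), then center and induct, so that everything rests on the combinatorial lemma that for $\ii\in I_n^\ee$ every $\pi\in NC^\ee[\ii]$ contains a singleton. That lemma is precisely what the paper asserts in a single sentence without proof, and your last paragraph---the observation that the allowed adjacent swaps never change the relative order of two letters with $\ee$-value $0$ (including equal letters, since $\ee_{ii}=0$), so the first interval block removed in any reduction must be a singleton---is an honest proof of it; this is where your write-up is more complete than the paper. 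Conversely, the merge step is where the paper is slightly more explicit: besides quoting the product formula one must check that the partitions it produces from a given $\sigma\in NC^\ee[\ii']$ are exactly those $\pi\in NC^\ee[\ii]$ that collapse to $\sigma$ when the two identified points are merged (the paper spells this out for the block containing the merged pair); you should add that sentence, but since it is the same verification the paper itself performs, I view this as a presentational rather than a substantive gap. A side benefit of your route is that you avoid the free-product construction and the unit-consistency check altogether.
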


Let us first check that this formula is the correct one in the two extreme cases where all pairs have the same kind of independence. Assume first that all $\ee_{ij}=0$. Then $NC^\ee[\ii]$ is always $[0,\ker\ii]\subset NC(n)$ and the formula is just the moment-cumulant formula in the free case, combined with the fact that
our restriction to the summation $\pi\leq\ker\ii$ amounts to the vanishing of mixed free cumulants. This gives then the rule for the calculation of free random variables.

Consider now the other extreme that $\ee_{ij}=1$ for all $i\not= j$. Then $NC^\ee[\ii]$ factorizes as in \eqref{eq:NC-fact}, and \eqref{eq:m-c} is then
$$\ff(a_1\cdots a_n)=
\sum_{\pi=(\pi_V)_{V\in\ker\ii}}\prod_{V\in\ker\ii} \kk_{\pi_V}((a_i)\vert V)=
\prod_{V\in\ker\ii} \ff((a_k)\vert V),$$
i.e., $\ff(a_1\cdots a_n)$ factorizes into the product of the expectations of the product of the variables belonging to the same algebra. This is the rule for the calculation of independent random variables.

Note that for the previous calculation we actually only needed that all algebras for which we have a crossing in $\ker \ii$
commute. Hence the same arguments prove also the
following (which was also shown in \cite{Mlo}).

\begin{corollary}
Let $\cA_i$ ($i\in I$) be $\ee$-independent in $(\cA,\ff)$. Consider a mixed moment $\ff(a_1\cdots a_n)$ for $(a_1,\dots,a_n)\in \cA_\ii$ with $\ii\in I^n$. If $\ker \ii\in NC^\ee[\ii]$ then the mixed moment factorizes into the product
$$\ff(a_1\cdots a_n)=
\prod_{V\in\ker\ii} \ff((a_k)\vert V).$$
\end{corollary}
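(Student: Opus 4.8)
The plan is to repeat, essentially word for word, the computation made just above for the extreme case $\ee_{ij}=1$ for all $i\ne j$, after noting that the only feature of that computation which was used is the factorization \eqref{eq:NC-fact} of $NC^\ee[\ii]$ into a product of non-crossing lattices, one for each block of $\ker\ii$, and that the hypothesis $\ker\ii\in NC^\ee[\ii]$ is exactly what is needed to restore this factorization. So the one thing to verify is that, as soon as $\ker\ii\in NC^\ee[\ii]$, one has
$$NC^\ee[\ii]=\prod_{V\in\ker\ii}NC(V),$$
meaning that a partition $\pi\leq\ker\ii$ lies in $NC^\ee[\ii]$ if and only if the partition $\pi_V$ it induces on each block $V$ of $\ker\ii$ is non-crossing. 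One implication is automatic: if some $\pi_V$ had a crossing, all four points involved would carry the common index $i$ of the block $V$, so $(\ee,\ii)$-non-crossingness would require $\ee_{ii}=1$, contradicting the convention $\ee_{ii}=0$. For the converse, assume every $\pi_V$ is non-crossing and let $p_1<q_1<p_2<q_2$ witness a crossing of $\pi$, with $p_1\sim_\pi p_2$, $q_1\sim_\pi q_2$ and $p_1\not\sim_\pi q_1$. If $i(p_1)=i(q_1)$ then all four indices coincide and the crossing already lives inside a single $\pi_V$, which is impossible; hence $i(p_1)\ne i(q_1)$, the blocks of $\ker\ii$ through $p_1$ and through $q_1$ cross one another, and $\ker\ii\in NC^\ee[\ii]$ forces $\ee_{i(p_1)i(q_1)}=1$, which is precisely the condition for $\pi$ to be $(\ee,\ii)$-non-crossing. (Equivalently, the hypothesis says that $\cA_i$ and $\cA_j$ commute for every pair $i\ne j$ whose occurrences in $\ii$ interleave.)

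With this factorization in hand the statement follows from Theorem~\ref{thm:mom-cum} exactly as in the $\ee_{ij}=1$ discussion. Writing $\pi\in NC^\ee[\ii]$ as the family $(\pi_V)_{V\in\ker\ii}$ with $\pi_V\in NC(V)$, and using that $\pi\leq\ker\ii$ forces every block of $\pi$ to sit inside a single block of $\ker\ii$, so that $\kk_\pi(a_1,\dots,a_n)=\prod_{V\in\ker\ii}\kk_{\pi_V}((a_k)\vert V)$, one computes
$$\ff(a_1\cdots a_n)=\sum_{\pi\in NC^\ee[\ii]}\kk_\pi(a_1,\dots,a_n)=\prod_{V\in\ker\ii}\ \sum_{\pi_V\in NC(V)}\kk_{\pi_V}((a_k)\vert V)=\prod_{V\in\ker\ii}\ff((a_k)\vert V),$$
the final equality being the ordinary free moment-cumulant formula inside the subalgebra containing all entries of $(a_k)\vert V$ (they share one index, since $V$ is a block of $\ker\ii$). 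I do not anticipate a genuine obstacle: the whole proof is this reorganization of the cumulant sum, and the one point where the hypothesis $\ker\ii\in NC^\ee[\ii]$ is actually used is in establishing the displayed factorization of $NC^\ee[\ii]$.
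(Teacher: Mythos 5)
Your proof is correct and follows essentially the same route as the paper: the paper disposes of this corollary by remarking that the computation done for the extreme case $\ee_{ij}=1$ only used that algebras whose occurrences cross in $\ker\ii$ commute, i.e.\ exactly the hypothesis $\ker\ii\in NC^\ee[\ii]$, so that the factorization \eqref{eq:NC-fact} of $NC^\ee[\ii]$ over the blocks of $\ker\ii$ persists and Theorem \ref{thm:mom-cum} gives the product formula. You simply spell out in detail (via the $(\ee,\ii)$-non-crossing characterization) the factorization step that the paper leaves implicit, which is a welcome but not substantively different elaboration.
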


\begin{proof}
For each $i\in I$, let $(\cB_i,\psi_i)$ be a copy of $(\cA_i,\ff_i)$ and define $\cB$ as the free product of the $\cB_i$ with amalgamation over $\CC 1$; i.e., we identify the units of the $\cB_i$, but have no further relation among different $\cB_i$'s. Hence in $\cB$ we have that $\cB_i\cap \cB_j=\CC 1$ for all $i\not= j$.

We define  
$$\psi^\knk:\bigcup_{\ii\in I^n}\cB_\ii\to\CC$$ by
\begin{equation}\label{eq:m-c-b}
\psi^\knk(b_1,\dots ,b_n)=\sum_{\pi\in NC^\ee[\ii]}\kk_\pi(b_1,\dots,b_n)\qquad
\text{for $(b_1,\dots,b_n)\in\cB_\ii$}.
\end{equation}
The $\kk_\pi(b_1,\dots,b_n)$ are here as before the product of the free cumulants corresponding to the blocks of $\pi$, and for each block we use the free cumulants given by the corresponding $\psi_i$. The only ambiguity in the definition \eqref{eq:m-c-b} might occur when some of the $b_k$ belong to several $\cB_i$. However, this can only happen for multiples of 1. Let us check the case where $b_1=1$, so that we have $(b_1,b_2,\dots,b_n)\in\cB_\ii$
for $\ii=(i,i(2),\dots,i(n)$ for arbitrary $i$. We have to see that the formula in \eqref{eq:m-c-b} is independent of $i$. But this follows from the fact that $\kk_\pi(1,b_2,\dots,b_n)$ is zero unless the first element is a singleton, hence $\pi$ must be of the form $\pi=(1)\cup\sigma$, where $\sigma\in\cP(2,\dots,n)$. But in the constraint $(1)\cup\sigma\in NC^\ee[(i,i(2),\dots,i(n)]$ the value of $i$ does not play a role, since the block $(1)$ cannot have any crossings. Thus our $\psi^\knk$'s are well-defined. We will use them to define a functional $\psi$ on $\cB$ by putting
$$\psi(b_1\cdots b_n):=\psi^\knk(b_1,\dots ,b_n)\qquad
\text{for $(b_1,\dots,b_n)\in\cB_\ii$}$$
and extend this linearly.
Again we have to make sure that this is well-defined; we have to check that in the situation where two neighbouring $b_k$'s, say $b_1$ and $b_2$ come from the same algebra,
both possible definitions give the same, i.e.,
for $(b_1,\tilde b_1,b_2\dots,b_{n})\in\cB_{[(i(1),i(1),i(2),\dots,i(n))]}$ we 
must have
\begin{equation*}
\psi^{(n+1)}(b_1,\tilde b_1,b_2\dots,b_n)=\psi^\knk(b_1\tilde b_1,b_2,\dots,b_n).
\end{equation*}
The left hand side is given by
\begin{equation}\label{eq:psi-LHS}
\psi^{(n+1)}(b_1,\tilde b_1,b_2\dots,b_n)=\sum_{\pi\in NC^\ee[(i(1),i(1),i(2),\dots,i(n))]} \kk_\pi(b_1,\tilde b_1,b_2,\dots,b_n),
\end{equation}
whereas the right hand side is given by
\begin{equation}\label{eq:psi-RHS}
\psi^\knk(b_1\tilde b_1,b_2,\dots,b_n)=\sum_{\sigma\in NC^\ee[(i(1),i(2),\dots,i(n))]}\kk_\sigma(b_1\tilde b_1,b_2,\dots,b_n).
\end{equation}
The cumulant corresponding to the first block $V=(1<r(1)<\dots<r(p))$ of $\sigma$ is now, by the formula for free cumulants with products as arguments (see Theorem 11.12 in \cite{NS}), the same as
\begin{multline*}
\kk_{p+1}(b_1\tilde b_1,b_{r(1)},\dots,b_{r(p)})=\kk_{p+2}(b_1,\tilde b_1,b_{r(1)},\dots,b_{r(p)})\\+\sum_{q=0}^r\kk_{p-q+1}(b_1,b_{r(q+1)},b_{r(q+2)},\dots,b_{r(p)})\cdot
\kk_{q+1}(\tilde b_1,b_{r(1)},\dots,b_{r(q)})
\end{multline*}
These terms correspond exactly to the contributions of those $\pi$ in 
\eqref{eq:psi-LHS}, which collapse to $\sigma$ under the identification of the first two elements. This shows that \eqref{eq:psi-LHS} and \eqref{eq:psi-RHS} agree and our $\psi$ is well-defined on $\cB$.

We claim now that this $\psi$ satisfies the defining property of $\ee$-independence. Assume we have
$(b_1,\dots,b_n)\in\cB_\ii$ with $\ii\in I^\ee_n$ and such that $\psi(b_k)=0$ for all $k=1,\dots,n$. But then the definition of $I^\ee_n$ and $NC^\ee[\ii]$ imply that every $\pi\in NC^\ee[\ii]$ must have at least one singleton, which means that the corresponding contribution $\kk_\pi$ in \eqref{eq:m-c-b} is zero; hence 
$$\psi(b_1\cdots b_n)=\psi^{(n)}(b_1,\dots,b_n)=0.$$
Since $\ee$-independence and the distribution on the individual algebras determines the distribution on the generated algebra, $\psi$ must agree, via the canonical identification $\cB_i\to\cA_i$,
with $\ff$ on the algebra generated by the $\cA_i$; hence the formula \eqref{eq:m-c-b} is also valid for $\ff$.
\end{proof}

\begin{remark}\label{rem:Guillaume}
One might wonder about the apparent unsymmetry of Theorem \ref{thm:mom-cum} with respect to free and classical independence, as
only free cumulants show up. However, as was pointed out to us by Guillaume Cebron this is due to our choice that on the diagonal $\ee_{ii}$ is always zero; which results in the fact that each variable is described in terms of its free cumulants. We could also change this convention and put all $\ee_{ii}=1$; then each variable goes with classical cumulants and we get a version of Theorem \ref{thm:mom-cum} where the classical cumulants instead of the free cumulants show up. Of course, the set $NC^\ee$ is then different, in particular, with this definition we would have $NC^\ee[(i,i,\dots,i)]=\cP(n)$.
Also mixtures between free and classical cumulants are possible, by choosing some $\ee_{ii}=0$ and other $\ee_{jj}=1$.
\end{remark}

\section*{Acknowledgements}
We thank Franz Lehner and Guillaume Cebron for discussions; in particular, the former for pointing out the relevance of Coxeter and Artin groups in this context and the latter for Remark \ref{rem:Guillaume}.

\bibliographystyle{plain}
\bibliography{Speicher}

\end{document}